\documentclass[12pt,twoside,reqno]{amsart}

\usepackage[OT1]{fontenc}    %
\usepackage{type1cm}         %
\usepackage{amsthm}          %
\usepackage[dvips]{graphicx} % for gfx
\usepackage[bf]{caption}     % captionstyles
\usepackage{psfrag}          % replace text in figures
%\usepackage[active]{srcltx}  % for the inverse search
%\usepackage[notref,notcite]{showkeys}  % comment this to get rid of
                                       % the labels on margin

\usepackage{amsfonts}
\usepackage{amsmath}
\usepackage{amssymb}
\usepackage{wasysym}

\usepackage{verbatim}        % comment the following line to get rid
                             % of the comments in the text

\numberwithin{equation}{section}

\theoremstyle{plain}
\newtheorem{theorem}{Theorem}[section]

\newtheorem{lemma}[theorem]{Lemma}

\theoremstyle{remark}

\theoremstyle{definition}

\newcommand{\R}{\mathbb{R}}

\newcommand{\Z}{\mathbb{Z}}
\newcommand{\N}{\mathbb{N}}

\newcommand{\roo}{\varrho}

\DeclareMathOperator{\dist}{dist}

\DeclareMathOperator{\por}{por}

\oddsidemargin=21pt
\evensidemargin=21pt
\headheight=12pt
\textheight=592pt
\marginparsep=10pt
\footskip=30pt
\hoffset=0pt
\paperwidth=597pt
\topmargin=20pt
\headsep=25pt
\textwidth=410pt
\marginparwidth=35pt
\marginparpush=7pt
\voffset=0pt
\paperheight=845pt

%%%%%%%%%%%%%%%%%%%%%%%%%%%%%%%%%%%%%%%%%%%%%%%%%%%%%%%%%%%%%%%%%%%%%
\begin{document}

\title{Porosity and regularity in metric measure spaces}

\author{Antti K\"aenm\"aki}

\address{Department of Mathematics and Statistics \\
         P.O. Box 35 (MaD) \\
         FI-40014 University of Jyv\"askyl\"a \\
         Finland}

\email{antakae@maths.jyu.fi}

\thanks{The author acknowledges the support of the Academy of Finland
  (project \#114821).}
\subjclass[2000]{Primary 28A80; Secondary 51F99.}
\keywords{Metric measure space, porosity, regularity}
\date{\today}

\begin{abstract}
  This is a report of a joint work with E.\ J\"arvenp\"a\"a, M.\
  J\"arvenp\"a\"a, T.\ Rajala, S.\ Rogovin, and V.\ Suomala. In
  \cite{our_paper}, we characterized uniformly porous sets in
  $s$-regular metric spaces in terms of regular sets by verifying that
  a set $A$ is uniformly porous if and only if there is $t < s$ and a
  $t$-regular set $F \supset A$. Here we outline the main idea of the
  proof and also present an alternative proof
  for the crucial lemma needed in the proof of the result.
\end{abstract}

\maketitle

\section{Setting and result}

We assume that $X = (X,d)$ is a complete separable metric space.
If $s > 0$ then a measure $\mu$ on $X$ is called \emph{$s$-regular on
  a set $A \subset X$} provided that there are constants $0 <a_\mu \le
b_\mu < \infty$ and $r_\mu > 0$ such that
\begin{equation*}
  a_\mu r^s \le \mu\bigl( B(x,r) \bigr) \le b_\mu r^s
\end{equation*}
for every $x \in A$ and $0<r<r_\mu$. A set $A \subset X$ is
$s$-regular if there is a measure $\mu$ on $X$ which is $s$-regular on $A$
and $\mu(X \setminus A) = 0$. We call a measure $\mu$ on $X$
\emph{doubling} if there are constants $c_\mu \ge 1$ and $r_\mu > 0$
such that
\begin{equation*}
  0 < \mu\bigl( B(x,2r) \bigr) \le c_\mu \mu\bigl( B(x,r) \bigr) <
  \infty
\end{equation*}
for every $x \in X$ and $0 < r < r_\mu$. Clearly, if $\mu$ is
$s$-regular on $X$ then it is also doubling.
Finally, a set $A \subset X$ is called \emph{uniformly (lower)
  $\roo$-porous} provided that there is a constant $r_p > 0$ such that
\begin{equation*}
  \por(A,x,r) > \roo
\end{equation*}
for all $x \in A$ and $0 < r < r_p$, where
\begin{equation*}
  \por(A,x,r) = \sup\{ \roo \ge 0 : B(y,\roo r) \subset B(x,r)
  \setminus A \text{ for some } y \in X \}.
\end{equation*}

It is easy to see that if $\mu$ is a doubling measure on $X$ and $0 <
\alpha < 1$ then
\begin{equation}
  \label{eq:doubling_prop}
  \mu\bigl( B(x,\alpha r) \bigr) \ge c_\mu^{\lfloor \log_2 \alpha
    \rfloor} \mu\bigl( B(x,r) \bigr)
\end{equation}
whenever $x \in X$ and $0 < r < r_\mu$. Here $\lfloor a \rfloor =
\max\{ n \in \Z : n \le a \}$ for $a \in \R$. It is also
straightforward to show that if $A \subset X$ is uniformly
$\roo$-porous for some $\roo > 0$ then $\mu(A) = 0$ for any doubling
measure $\mu$ on $X$. This is true essentially because the points
of $A$ are  nondensity points.

Observe that, by \cite[Theorem 5.7]{Mattila} and \cite[Theorem
3.16]{Cutler}, both the Hausdorff dimension and the packing dimension
of $A$ equal to $s$ provided that $A \subset X$ is $s$-regular.
An unpublished result of Saaranen shows
that if $X$ is $s$-regular then for each $0<t<s$ there exists a
$t$-regular set $A \subset X$. This is the only place where we need
the completeness of $X$. The separability assumption
is natural since the Hausdorff dimension of any nonseparable metric
space is infinity. Also, it can be shown that no $\sigma$-finite
doubling measures exist in nonseparable spaces.
%
%If $A \subset X$ is uniformly $\roo$-porous then $0 \le \roo \le
%1$. In the case $X = \R^n$, we have $0 \le \roo \le \tfrac12$. There
%exist metric spaces having uniformly $1$-porous subsets. 

Our result under consideration is the following. A complete proof with
a slightly better result can be found from \cite{our_paper}.

\begin{theorem}
  Suppose $X$ is $s$-regular. Then $A$ is uniformly $\roo$-porous for
  some $\roo>0$ if and only if there is $t < s$ and a $t$-regular set
  $F \supset A$.
\end{theorem}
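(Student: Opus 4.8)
The statement is an equivalence whose two halves are of very different difficulty; I would dispose of the reverse implication quickly and concentrate on the forward one.

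\emph{Regularity $\Rightarrow$ porosity.} Let $F\supseteq A$ be $t$-regular, $t<s$, witnessed by $\nu$, and let $\mu$ be $s$-regular on $X$. Since $\por(A,x,r)\ge\por(F,x,r)$ for every $x\in A\subseteq F$, it suffices to prove $F$ uniformly porous. Fix a small $\eps>0$ depending only on $s,t$ and the two sets of regularity constants; I claim $F$ is uniformly $\eps$-porous. If not, there are $x\in F$ and arbitrarily small $r$ with $\por(F,x,r)\le\eps$. For such $x,r$ take a maximal $8\eps r$-separated set $\{y_i\}_{i=1}^N\subset B(x,r/2)$. Disjointness of the balls $B(y_i,\eps r)\subset B(x,r)$, the covering property of a maximal net, and the $s$-regularity of $\mu$ give $N\asymp\eps^{-s}$. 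On the other hand $\por(F,x,r)\le\eps$ forces every $B(y_i,2\eps r)\subset B(x,r)$ to meet $F$, say at $z_i$; then $\{z_i\}$ is a $4\eps r$-separated subset of $F\cap B(z_1,3r)$, and the $t$-regularity of $\nu$ on $F$ yields $N\lesssim\eps^{-t}$. Hence $\eps^{-(s-t)}\lesssim1$ with a constant depending only on the data, impossible once $\eps$ was chosen small.

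\emph{Porosity $\Rightarrow$ regularity.} Here the plan is a Moran/Cantor-type construction in which porosity is used solely through a covering bound and $s$-regularity is used to count cubes. \emph{Step 1 (the crucial lemma).} Uniform $\roo$-porosity of $A$ produces some $t_0<s$, depending only on $\roo,s$ and the regularity constants, such that for every $z\in X$, every small $R$ and every $0<\lambda<1$, $A\cap B(z,R)$ is covered by at most $C\lambda^{-t}$ balls of radius $\lambda R$, for any fixed $t\in(t_0,s)$; equivalently, $A$ has Assouad dimension $<s$. This follows from the packing mechanism above, iterated over scales: the porosity hole inside any ball removes a definite proportion of its $\mu$-mass, hence of a maximal net in $A$ it contains, so refining by a fixed factor $2^{-\ell_0}$ multiplies the covering number by at most $(1-c\roo^s)2^{\ell_0 s}=2^{\ell_0 t_0}$. \emph{Step 2 (construction).} Fix $t\in(t_0,s)$ and then an integer $\ell$ so large that $m:=\lceil C'2^{\ell t}\rceil$ satisfies $\log_2 m<\ell s$, where $C'$ absorbs the constants of Step~1. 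Take a system of dyadic-type cubes adapted to $(X,\mu)$ (Christ cubes, available as $s$-regularity implies doubling), those of generation $j\ell$ having diameter $\asymp2^{-j\ell}$, $\mu$-measure $\asymp2^{-j\ell s}$, and $\asymp2^{\ell s}$ children at generation $(j{+}1)\ell$. Starting from all generation-$0$ cubes, declare ``kept'' by induction: inside each kept generation-$j\ell$ cube $Q$ keep exactly $m$ of its children, namely all those meeting $A$ — at most $C'2^{\ell t}\le m$ of them by Step~1 — together with arbitrary padding children (there is room, since $m\ll2^{\ell s}$). \emph{Step 3 (conclusion).} Put $F=\bigcap_j\bigl(\bigcup\{\text{kept generation-}j\ell\text{ cubes}\}\bigr)$ and let $\nu$ assign mass $m^{-j}$ to each kept generation-$j\ell$ cube (splitting it equally among its $m$ kept children). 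Every cube meeting $A$ is kept, so $A\subseteq F$; the kept cubes of a fixed generation are pairwise disjoint and of comparable diameter and the branching is the constant $m$, so $\nu$ is genuinely $\tilde t$-regular on $F$ with $\nu(X\setminus F)=0$, where $\tilde t=\log_2 m/\ell<s$. (No separate treatment of unbounded $A$ is needed: $\nu$ is automatically $\sigma$-finite, the uniformity in $\roo$ and in the regularity constants giving one common exponent across all generation-$0$ cubes.)

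\emph{Main obstacle.} The substance is concentrated in Step~1 — the promised crucial lemma — which upgrades the one-scale, pointwise porosity hypothesis to a multi-scale covering bound with exponent bounded away from $s$, uniformly in location and scale. Granting it, the cube construction is essentially bookkeeping, the one point requiring attention being that the kept cubes of each generation be honestly disjoint with uniformly comparable diameters, so that $\nu$ satisfies the two-sided estimate with the \emph{same} exponent $\tilde t$ — something automatic for dyadic cubes in $\R^n$ but which here rests on the regularity of the Christ system. I would expect the alternative argument announced in the abstract to replace the iterated packing count of Step~1 by a cleaner density or covering estimate.
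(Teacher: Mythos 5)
Your first direction is fine and is essentially the paper's counting argument, and your Cantor-type construction in Steps 2--3 is a legitimate (and genuinely different) alternative to the paper's construction, which instead places $t$-regular sets --- obtained from Saaranen's existence result --- inside the porosity holes at all scales and locations. But everything hinges on your Step 1, and there the argument you sketch does not work. You claim that refining by a factor $2^{-\ell_0}$ multiplies the covering number of $A$ by at most $(1-c\roo^s)2^{\ell_0 s}$, the gain coming from subtracting the mass of one porosity hole per ball per step. In a general $s$-regular space, however, the covering (or net) count of a ball at relative scale $2^{-\ell_0}$ is only bounded by $C(a_\mu,b_\mu,s)\,2^{\ell_0 s}$ with a multiplicative constant $C>1$: unlike Euclidean dyadic cubes there is no exact tiling, and even Christ cubes have branching numbers determined only up to constants. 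Removing a single hole improves this per-step bound only by the factor $(1-c\roo^s)$, so the per-step factor is $C(1-c\roo^s)2^{\ell_0 s}$, which for small $\roo$ --- precisely the regime of interest --- is still $\ge 2^{\ell_0 s}$, and iterating gives no exponent below $s$. The identity $(1-c\roo^s)2^{\ell_0 s}=2^{\ell_0 t_0}$ with $t_0<s$ uniform in scale and location is exactly the Euclidean dyadic-cube argument of Martio--Vuorinen/Salli, and it does not transfer to metric measure spaces by a cardinality count.

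What repairs this is a cumulative, measure-based estimate in which the regularity constants are paid once (entering the exponent) rather than once per generation: this is the paper's Lemma \ref{mvlemma}, asserting $\mu\bigl((A\cap B(x_0,r_0))(r)\bigr)\le C\,\mu\bigl(B(x_0,r_0)\bigr)(r/r_0)^\delta$ with $\delta=C\roo^s/\log(1/\roo)$ in the $s$-regular case. Its proof decomposes the neighborhood of $A$ into annular layers $A(r_0\roo^{3k},r_0\roo^{3(k-1)})$ and shows, via the porosity holes and the $5r$-covering theorem, that each layer carries at least a fixed proportion $\gamma\sim\roo^s$ of the mass of all deeper layers, whence geometric decay with exponent $\delta\sim\gamma/\log(1/\roo)$; note that your heuristic even predicts this exponent, but only the measure-theoretic layering makes it rigorous. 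From this lemma your Assouad-type covering bound does follow (disjoint $r$-balls centred in $A\cap B(x_0,r_0)$ lie in the $r$-neighbourhood and each has mass $\gtrsim r^s$), and then your Christ-cube construction could be carried through --- with the side benefit of avoiding the unpublished existence result and the completeness of $X$ --- but as written Step 1 is a genuine gap, not bookkeeping.
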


\section{Alternative approach}

We begin with the following lemma that gives an alternative proof for
\cite[Corollaries 4.5 and 4.6]{our_paper}. The proof was found while preparing
the article \cite{our_paper} and is also credited by the authors of
that article. Observe, however, that in \cite[Corollary
4.6]{our_paper} we were able to find a better exponent $\delta$. The
lemma generalizes the argument of \cite[Lemma 2.8]{MartioVuorinen}.

Given a set $A \subset X$, we define $A(r) = \{ x \in X :
\dist(x,A) < r \}$ as $r > 0$. With the notation $C(\cdot)$,
we mean a constant whose dependence is indicated in the parentheses.

\begin{lemma} \label{mvlemma}
  Suppose that $\mu$ is a doubling measure on $X$. If $A \subset X$
  is uniformly $\roo$-porous then
  \begin{equation*}
    \mu\Bigl( \bigl( A \cap B(x_0,r_0) \bigr)(r) \Bigr) \le C(c_\mu,\roo)
    \mu\bigl( B(x_0,r_0) \bigr) \Bigl(\frac{r}{r_0}\Bigr)^\delta
  \end{equation*}
  for every $x_0 \in X$ and $0 < r < r_0 \le r_\mu$, where 
  $\delta = C(c_\mu)\roo^{\log_2 c_\mu}/\log(1/\roo)$.
  Moreover, if $\mu$ is $s$-regular then $\delta =
  C(a_\mu,b_\mu,s)\roo^s/\log(1/\roo)$.
\end{lemma}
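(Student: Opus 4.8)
The plan is to reduce the estimate to a single multiplicative inequality between neighbourhoods at geometrically related scales. Writing $A_0 = A \cap B(x_0,r_0)$ and fixing the ratio $\tau = \roo/4$, the crucial step will be to show that
\begin{equation*}
  \mu\bigl( A_0(\tau\sigma) \bigr) \le \theta\, \mu\bigl( A_0(\sigma) \bigr)
\end{equation*}
for all $0 < \sigma \le r_0$ and some $\theta = \theta(c_\mu,\roo) \in (0,1)$. Granting this, I would iterate along $\sigma_k = \tau^k r_0$ to get $\mu\bigl(A_0(\sigma_k)\bigr) \le c_\mu\theta^k\mu\bigl(B(x_0,r_0)\bigr)$ (at the last step using $A_0(r_0) \subset B(x_0,2r_0)$ and doubling), and then, given $r$, choose the integer $k$ with $\sigma_{k+1} \le r < \sigma_k$; since $k \ge \log(r_0/r)/\log(1/\tau) - 1$, this converts $\theta^k$ into $\theta^{-1}(r/r_0)^{\log(1/\theta)/\log(1/\tau)}$ and gives the asserted bound with $C(c_\mu,\roo) = c_\mu/\theta$ and
\begin{equation*}
  \delta = \frac{\log(1/\theta)}{\log(1/\tau)} = \frac{\log(1/\theta)}{\log(4/\roo)}.
\end{equation*}
Reading off the size of $\delta$ then reduces to bounding $\log(1/\theta)$ from below.

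To prove the multiplicative inequality, the idea is to place inside the annulus $A_0(\sigma)\setminus A_0(\tau\sigma)$ a \emph{disjoint} family of balls of total measure at least a fixed fraction of $\mu\bigl(A_0(\tau\sigma)\bigr)$. First I would take a maximal $3\sigma$-separated set $\{z_i\}$ in $A_0(\tau\sigma)$, so that $A_0(\tau\sigma) \subset \bigcup_i B(z_i,3\sigma)$, and for each $i$ pick $a_i \in A_0$ with $d(z_i,a_i) < \tau\sigma$. Applying uniform porosity to $a_i$ at scale $\sigma/2$ produces $w_i$ with $B(w_i,\roo\sigma/2) \subset B(a_i,\sigma/2)\setminus A$, and the concentric ball $H_i = B(w_i,\roo\sigma/4)$ should have the three properties: (i) $H_i \subset A_0(\sigma)$, since every point of $H_i$ is within $\roo\sigma/4 + \sigma/2 < \sigma$ of $a_i \in A_0$; (ii) $H_i \cap A_0(\tau\sigma) = \emptyset$, since $\dist(w_i,A) \ge \roo\sigma/2$ forces $\dist(\,\cdot\,,A_0) \ge \roo\sigma/4 = \tau\sigma$ throughout $H_i$; and (iii) the $H_i$ are pairwise disjoint, because $\{z_i\}$ is $3\sigma$-separated while $d(z_i,w_i) < \tau\sigma + \sigma/2 < \sigma$. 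By (i)--(iii), $\bigsqcup_i H_i \subset A_0(\sigma) \setminus A_0(\tau\sigma)$, and since $B(z_i,3\sigma) \subset B(w_i,4\sigma)$ the doubling estimate \eqref{eq:doubling_prop} gives $\mu(H_i) \ge c_\mu^{-m}\mu\bigl(B(z_i,3\sigma)\bigr)$ with $m = \lceil \log_2(16/\roo)\rceil$. Summing over $i$ yields
\begin{equation*}
  \mu\bigl(A_0(\sigma)\bigr) - \mu\bigl(A_0(\tau\sigma)\bigr) \ge \sum_i \mu(H_i) \ge c_\mu^{-m}\sum_i \mu\bigl(B(z_i,3\sigma)\bigr) \ge c_\mu^{-m}\,\mu\bigl(A_0(\tau\sigma)\bigr),
\end{equation*}
that is, the multiplicative inequality with $\theta = (1 + c_\mu^{-m})^{-1}$. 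Since $\log(1/\theta) \ge (\log 2)\,c_\mu^{-m} \ge (\log 2)\,c_\mu^{-5}\roo^{\log_2 c_\mu}$, the $\delta$ above has the stated order $\roo^{\log_2 c_\mu}/\log(1/\roo)$.

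The step I expect to be the real obstacle is securing a contraction factor $\theta < 1$ with the correct dependence on $\roo$. The obvious approach --- cover $A_0(\tau\sigma)$ by balls centred on $A_0$ and remove one porosity hole from inside each --- does not work: the covering balls overlap, so summing their measures loses a factor of at least $c_\mu$ (incurred when passing between a ball and a concentric ball of comparable but different radius) at every scale, and this swamps the gain coming from a single hole. What saves the argument is to deposit the removed holes \emph{in the annulus} and keep them \emph{pairwise disjoint}, so that their measures add with no loss and need only be weighed against a plain cover of $A_0(\tau\sigma)$; the $3\sigma$-separation of the net together with the bound $d(z_i,w_i) < \sigma$ is precisely what makes the disjointness work.

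For the $s$-regular refinement I would run exactly the same scheme, only replacing the crude doubling bound for $\mu(H_i)$ by the sharp two-sided estimates: from $\mu(H_i) \ge a_\mu(\roo\sigma/4)^s$, $\mu\bigl(B(z_i,3\sigma)\bigr) \le b_\mu(3\sigma)^s$ and the counting bound $\#\{i\} \ge \mu\bigl(A_0(\tau\sigma)\bigr)/\bigl(b_\mu(3\sigma)^s\bigr)$ one gets $\sum_i\mu(H_i) \ge (a_\mu/b_\mu)(\roo/12)^s\,\mu\bigl(A_0(\tau\sigma)\bigr)$, hence $\theta = \bigl(1 + (a_\mu/b_\mu)(\roo/12)^s\bigr)^{-1}$ and $\delta$ of order $\roo^s/\log(1/\roo)$. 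Throughout one invokes porosity and \eqref{eq:doubling_prop} only at admissible scales, which at worst forces a harmless adjustment of the top generation $\sigma_0 = r_0$ --- there one simply uses $\mu\bigl(A_0(\tau r_0)\bigr) \le \mu\bigl(B(x_0,2r_0)\bigr) \le c_\mu\mu\bigl(B(x_0,r_0)\bigr)$ and begins the iteration one step below --- and is otherwise absorbed into the constants.
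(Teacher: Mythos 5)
Your proof is correct (modulo the boundary-scale adjustments you already flag, which the paper's own proof also glosses over) and is essentially the paper's argument: porosity holes, made pairwise disjoint by a separated net (in place of the $5r$-covering theorem) and weighed against a cover of the inner neighbourhood via the doubling estimate \eqref{eq:doubling_prop}, force a fixed-factor decay of $\mu\bigl(A_0(\cdot)\bigr)$ from one geometric scale to the next, which is then iterated and converted into the power bound. The only organizational difference is that you prove the consecutive-scale contraction $\mu\bigl(A_0(\tau\sigma)\bigr)\le\theta\,\mu\bigl(A_0(\sigma)\bigr)$ directly with ratio $\tau=\roo/4$, whereas the paper proves the equivalent annulus-versus-tail inequality $\alpha_k\ge\gamma\sum_{i>k}\alpha_i$ at ratio $\roo^3$ and extracts the geometric decay by induction; both give $\delta$ of the same order.
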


\begin{proof}
  We may assume that $\roo\leq\tfrac{1}{3}$. Let us denote for
  $0<t_1,t_2<\infty$
  \begin{align*}
    A(t_1,t_2) &= \{ x \in X : t_1 < \dist(x,A) \le t_2 \} \\
    &= A(t_2) \setminus A(t_1).
  \end{align*}
  Furthermore, we denote
  \begin{equation*}
    \alpha_k = \mu\bigl( A(r_0\roo^{3k},r_0\roo^{3(k-1)}) \bigr)
  \end{equation*}
  as $k \in \N$.

  We shall first find a number $0<\gamma<1$
  such that
    $\alpha_k \ge \gamma \sum_{i=k+1}^\infty \alpha_i$
  whenever $k\in\mathbb{N}$. To do so, fix $k$ and notice that for
  every $x \in A$ there is $y \in B(x,r_0 \roo^{3k-2})$ such that
  \begin{equation}
    \label{eq:inc1}
    B(y,r_0 \roo^{3k-1}) \subset B(x,r_0 \roo^{3k-2}) \setminus A.
  \end{equation}
  We claim that this implies %the $r_0 \roo^{3k}$-neighbourhood
  %of $A$ is a subset of the $2r_0\roo^{3k-2}$-neighbourhood of the set
  %$A(r_0 \roo^{3k-1},r_0 \roo^{3k-2})$, that is,
  \begin{equation}
    \label{eq:inc2}
    A(r_0 \roo^{3k}) \subset \bigl( A(r_0 \roo^{3k-1},r_0 \roo^{3k-2})
    \bigr)(2r_0 \roo^{3k-2}).
  \end{equation}
  To see this, pick $z \in A(r_0 \roo^{3k})$. Since $\dist(z,A) \le
  r_0 \roo^{3k}$, there is $x \in A$ such that $d(z,x) <
  r_0 \roo^{3k-1}$. According to \eqref{eq:inc1}, there is $y \in
  A(r_0 \roo^{3k-1},r_0 \roo^{3k-2})$ for which $d(x,y) <
  r_0 \roo^{3k-2}$. Therefore
  \begin{equation*}
    d(z,y) \le d(z,x) + d(x,y) < r_0\roo^{3k-1} + r_0\roo^{3k-2} < 2r_0\roo^{3k-2}
  \end{equation*}
  and hence \eqref{eq:inc2} follows.

  Attaching for each $z \in A(r_0 \roo^{3k-1},r_0 \roo^{3k-2})$ a ball
  $B(z,\tfrac15 r_0 \roo^{3k-2})$, we find, using the $5r$-covering
  theorem (see \cite[Theorem 1.2]{Heinonen} and \cite[Theorem
  2.1]{Mattila}), an index set $I$ and points $z_i \in
  A(r_0 \roo^{3k-1},r_0 \roo^{3k-2})$, $i \in I$, such that
  \begin{equation}
    \label{eq:inc3}
    A(r_0 \roo^{3k-1},r_0 \roo^{3k-2}) \subset \bigcup_{i \in I}
    B(z_i,r_0 \roo^{3k-2})
  \end{equation}
  and $B(z_i,\tfrac15 r_0 \roo^{3k-2}) \cap B(z_j,\tfrac15 r_0 \roo^{3k-2}) =
  \emptyset$ for $i \ne j$.
  Since for each $i \in I$ and
  for every $x \in B(z_i,\tfrac12 r_0 \roo^{3k-1})$ we have $\dist(x,A)
  \le d(x,z_i) + \dist(z_i,A) \le  r_0(\tfrac12
  \roo^{3k-1}+\roo^{3k-2}) \le r_0 \roo^{3(k-1)}$ and
  $\dist(x,A) \ge \dist(z_i,A) - d(z_i,x) > r_0 \roo^{3k-1} -
  \tfrac12 r_0 \roo^{3k-1}
  \ge r_0 \roo^{3k}$, it follows that
  \begin{equation*}
    B(z_i,\tfrac12 r_0\roo^{3k-1}) \subset A(r_0\roo^{3k},r_0\roo^{3(k-1)})
  \end{equation*}
  whenever $i \in I$.
  On the other hand, we have
  \begin{equation*}
    A(r_0 \roo^{3k}) \subset \bigcup_{i \in I} B(z_i,r_0 3\roo^{3k-2})
  \end{equation*}
  by \eqref{eq:inc2} and \eqref{eq:inc3}, and hence, using
  \eqref{eq:doubling_prop},
  \begin{equation}\label{eq:makkara1}
  \begin{split}
    \alpha_k &\ge \sum_{i \in I} \mu\bigl( B(z_i,\tfrac12 r_0 \roo^{3k-1})
    \bigr)
    \geq \gamma\sum_{i \in I} \mu\bigl(
    B(z_i,r_0 3\roo^{3k-2}) \bigr) \\
    &\ge \gamma\mu\bigl( A(r_0 \roo^{3k})
    \bigr) = \gamma\sum_{i=k+1}^\infty
    \alpha_i
  \end{split}
  \end{equation}
  where 
  $\gamma = c_\mu^{\lfloor\log_2(\roo/6)\rfloor}$. 
  Moreover, if $\mu$ is $s$-regular then we have
  $\gamma = \frac{a_\mu}{b_\mu}(\roo/6)^s$. 

  Next we shall derive a growth inequality for the numbers
  $\alpha_k$. Notice that $\alpha_1 \ge \gamma \sum_{i=2}^\infty
  \alpha_i \ge \gamma\alpha_{2}$ by \eqref{eq:makkara1}. Assuming
  inductively
  \begin{equation*}
    \alpha_1 \ge \gamma(\gamma+1)^{k-1} \sum_{i=k+1}^\infty \alpha_i
    \ge \gamma(\gamma+1)^{k-1}\alpha_{k+1}
  \end{equation*}
  for $k\in\mathbb{N}$, we get
  \begin{align*}
    \alpha_1 &\ge \gamma(\gamma+1)^{k-1} \sum_{i=k+1}^\infty \alpha_i =
    \gamma(\gamma+1)^{k-1} \biggl( \alpha_{k+1} +
    \sum_{i=k+2}^\infty \alpha_i \biggr) \\
    &\ge \gamma(\gamma+1)^{k-1} \biggl( \gamma\sum_{i=k+2}^\infty
    \alpha_i + \sum_{i=k+2}^\infty \alpha_i \biggr) \\
    &= \gamma(\gamma+1)^{k} \sum_{i=k+2}^\infty \alpha_i \ge
    \gamma(\gamma+1)^{k} \alpha_{k+2}
  \end{align*}
  using \eqref{eq:makkara1} again. Thus we have shown that
  \begin{equation}
    \label{eq:makkara2}
    \alpha_{k} \le \gamma^{-1}(\gamma+1)^{2-k}\alpha_1
  \end{equation}
  whenever $k\in \N$.

  Now let $0<r<r_0$ and choose $k_0 \in \N$ such that
  $r_0 \roo^{3(k_0+1)} \le r < r_0 \roo^{3k_0}$. Then we have
  \begin{align*}
    \mu\bigl( A(r) \bigr) &\le \mu\bigl( A(r_0 \roo^{3k_0}) \bigr) \le
    \mu(A) + \sum_{i=k_0+1}^\infty \alpha_i \\
    &= \sum_{i=k_0+1}^\infty \alpha_i \le \gamma^{-1} \alpha_{k_0} \le
    \gamma^{-2}(\gamma+1)^{2-k_0} \alpha_{1} \\
    &\le \bigl((\gamma+1)/\gamma\bigr)^2 (\gamma+1)^{-k_0} \mu\bigl(
    A(r_0) \bigr) \\
    &\leq \bigl((\gamma+1)/\gamma\bigr)^2
    \roo^{-k_0 \log(\gamma+1)/\log\roo} \mu\bigl( B(x_0,2 r_0)
    \bigr) \\
    &\le \bigl((\gamma+1)/\gamma\bigr)^2 c_\mu \roo^{-1} \mu\bigl(
    B(x_0,r_0) \bigr) (r/r_0)^{\tfrac{\gamma}{6\log(1/\roo)}}
  \end{align*}
  by using \eqref{eq:makkara1}, \eqref{eq:makkara2}, and the fact that
  $\mu(A) = 0$. Recalling the definition of $\gamma$, the claim follows.
\end{proof}

\section{Main ideas}

To show that a $t$-regular set $A$ is uniformly $\roo$-porous for some
$\roo>0$ whenever $0<t<s$, fix $x \in A$ and small $r>0$ and consider
balls of radius $2^{-k}r$ with given $k \in \N$. It follows from
$s$-regularity that the number of such balls needed to cover $B(x,r)$
is at least a constant times $2^{ks}$. Taking any sufficiently
separated (by means of the $5r$-covering theorem) subcollection from those
balls so that each ball intersects $A$, it follows from $t$-regularity
that the cardinality of such a subcollection is at most a constant
times $2^{kt}$. Choosing $k$ large enough gives the claim.

To sketch the proof of the other direction, let $\delta =
c\roo^s/\log(1/\roo)$ be as in Lemma \ref{mvlemma} and take $s-\delta
< t < s$. We will next construct the set $F \supset A$. Assume for
simplicity that $r_\mu$ and $r_p$ are large. For all $j \in \N$ we use
the $5r$-covering theorem to find a collection of disjoint balls $\bigl\{
B\bigl( x_{ji},(\tfrac12\roo)^j \bigr) \bigr\}_i$ so that $5$ times
bigger balls cover $A$. Using the uniform $\roo$-porosity, choose
$B\bigl( z_{ji},\roo(\tfrac12\roo)^j \bigr) \subset B\bigl(
x_{ji},(\tfrac12\roo)^j \bigr) \setminus A$ for each $i$. Recalling
the result of Saaranen, we construct a $t$-regular set $F_{ji}$ on
each $B\bigl( z_{ji},(\tfrac12\roo)^{j+1} \bigr) =: B_{ji}$ and define $F = A
\cup \bigcup_{i,j} F_{ji}$. It suffices to show that $\nu$, defined to
be the sum of all $t$-regular measures of $F_{ji}$, is $t$-regular on
$F$. We will show that there is a constant $c \ge 1$ such that
$\nu\bigl( B(x,r) \bigr) \le cr^t$ for every $x \in A$ and $r > 0$
small enough. To conclude that $\nu$ is $t$-regular is then
straightforward.

Take $x \in A$, fix a scale $k \in \N$, and denote $N_j = \bigl\{ i :
B_{ji} \cap B\bigl( x,(\tfrac12\roo)^k \bigr) \ne \emptyset \bigr\}$.
It is clear that $B_{ji} \subset A\bigl( (\tfrac12\roo)^j \bigr)$ for
every $j$ and $i$. If $j \le k-1$ then $B_{ji} \cap B\bigl(
x,(\tfrac12\roo)^k \bigr) = \emptyset$ for all $i$. If $j \ge k$ then
$B_{ji} \subset B\bigl( x,4(\tfrac12\roo)^{k+1} \bigr)$ as $i \in
N_j$. Using these observations and Lemma \ref{mvlemma}, we have
\begin{align*}
  c\# N_j \roo^{(j+1)s} &\le \sum_{i \in N_j} \mu(B_{ji}) \le
  \mu\Bigl( \bigl( A \cap B\bigl( x,4(\tfrac12\roo)^{k+1} \bigr)
  \bigr)\bigl( (\tfrac12\roo)^j \bigr) \Bigr) \\
  &\le c\mu\bigl( B\bigl( x,4(\tfrac12\roo)^{k+1} \bigr) \bigr)
  \biggl( \frac{(\tfrac12\roo)^j}{4(\tfrac12\roo)^{k+1}}
  \biggr)^\delta \\
  &\le c\roo^{(s-\delta)(k+1)+j\delta}
\end{align*}
where $\mu$ is the $s$-regular measure,
yielding $\# N_j \le c\roo^{k(s-\delta)-j(s-\delta)}$ for every $j \ge
k$. Here $c$ denotes a constant whose value may vary on each instance
even within a line. This implies, for any $x \in A$ and
$(\tfrac12\roo)^{k+1} \le r < (\tfrac12\roo)^k$,
\begin{align*}
  \nu\bigl( B(x,r) \bigr) &\le \sum_{i,j} \nu\bigl( B_{ji} \cap
  B\bigl( x,(\tfrac12\roo)^k \bigr) \bigr)
  \le c\sum_{j=k}^\infty \# N_j \roo^{(j+1)t} \\
  &\le c\roo^{t+k(s-\delta)}\sum_{j=k}^\infty \roo^{jt-j(s-\delta)} =
  c\roo^{t+k(s-\delta)}\frac{\roo^{kt-k(s-\delta)}}{1-\roo^{t-s+\delta}} \le cr^t
\end{align*}
which is exactly what we wanted.

\end{document}